\newtheorem{theorem}{Theorem}[section]
\newtheorem{lemma}[theorem]{Lemma}
\newtheorem{conjecture}[theorem]{Conjecture}
\theoremstyle{remark}
\newtheorem{remark}[theorem]{Remark}
\numberwithin{equation}{section}
\def\BState{\State\hskip-\ALG@thistlm}
\def\downbar#1{
\setbox10=\hbox{$#1$}
            \dimen10=\ht10 \advance\dimen10 by 2.5pt
            \ifdim \dimen10<15pt 
               \advance\dimen10 by -0.5pt
               \dimen11=\dimen10
               \advance\dimen10 by 2.5pt
               \lower \dimen11
            \else \lower \ht10 \fi
            \hbox {\hskip 1.5pt \vrule height \dimen10 depth \dp10}}
\def\upbar#1{
\setbox10=\hbox{$#1$}
            \dimen10=\ht10 \advance\dimen10 by \dp10 \advance\dimen10 by 2.5pt
            \ifdim \dimen10<15pt 
                \advance\dimen10 by 2pt \fi
            \raise 2.5pt \hbox {\hskip -1.5pt \vrule height \dimen10}}
\begin{document}

\title[Orthogonal polynomials related with the Askey-Wilson operator]
{A structure relation for some specific orthogonal polynomials}
\author{D. Mbouna}
\address{University of Almer\'ia, Department of Mathematics, Almer\'ia, Spain}
\email{mbouna@ual.es}



\date{\today}

\thanks{I dedicate this work to the memory of my beloved friend and mentor J. Petronilho who passed away unexpectedly last year. To him, my eternal gratitude.}

\subjclass[2000]{Primary 42C05; Secondary 33C45}

\date{\today}

\keywords{Askey-Wilson operator, averaging operator, difference equations, orthogonal polynomials}

\maketitle


\begin{abstract}
By characterizing all orthogonal polynomials sequences $(P_n)_{n\geq 0}$ for which
$$
(ax+b)(\triangle +2\,\mathrm{I})P_n(x(s-1/2))=(a_n x+b_n)P_n(x)+c_n P_{n-1}(x),\quad n=0,1,2,\dots,
$$
where $\,\mathrm{I}$ is the identity operator, $x$ defines a $q$-quadratic lattice, $\triangle f(s)=f(s+1)-f(s)$, and $(a_n)_{n\geq0}$, $(b_n)_{n\geq0}$ and $(c_n)_{n\geq0}$ are sequences of complex numbers, we derive some new structure relations for some specific families of orthogonal polynomials.
\end{abstract}

\section{Introduction}
Orthogonal polynomials theory is an interesting branch of mathematics. It has applications in other related fields (statistics, approximation theory, number theory, ..., etc). The approach with lattices was most welcome because this is useful to describe in an unified way families of orthogonal polynomial sequences (OPS) including classical ones. For a recent reference on the subject we refer the reader to \cite{KCDMJP2022} including  some reference therein, where some properties of the so-called Askey-Wilson operator and Askey-Wilson polynomials are studied. Despite the fact that classical OPS (on lattices) constitute the most studied class of OPS, they are still some interesting unsolved problems (see \cite[p. 653]{IsmailBook2005}). The value of this contribution is then to study some structure relations and so to obtain characterization theorems for some specific families of OPS. This will certainly give ideas on some appropriate basis to use when dealing with these operators. 

 We consider the Askey-Wilson operator defined by,
\begin{align}
(\mathcal{D}_q  f)(x)=\frac{\breve{f}\big(q^{1/2} z\big)
-\breve{f}\big(q^{-1/2} z\big)}{\breve{e}\big(q^{1/2}z\big)-\breve{e}\big(q^{-1/2} z\big)},\quad
z=e^{i\theta}, \label{0.3}
\end{align}
where $\breve{f}(z)=f\big((z+1/z)/2\big)=f(\cos \theta)$ for each polynomial $f$ and $e(x)=x$.
Here $0<q<1$ and $\theta$ is not necessarily a real number (see \cite[p.\,300]{IsmailBook2005}). The following problem \cite[Conjecture 24.7.8]{IsmailBook2005} is a conjecture posed by M. E. H. Ismail. 
\begin{conjecture}\label{IsmailConj2478}
Let $(P_n)_{n\geq0}$ be a monic OPS and $\pi$ be a polynomial of degree at most $2$
which does not depend on $n$. If $(P_n)_{n\geq0}$ satisfies
\begin{align}
\pi(x)\mathcal{D}_q P_n (x)= (a_n x+b_n)P_n (x) + c_n P_{n-1} (x)\;, \label{0.2Dq}
\end{align}
then $(P_n)_{n\geq0}$ are continuous $q$-Jacobi polynomials, Al-Salam-Chihara polynomials,
or special or limiting cases of them. The same conclusion holds if $\pi$ has degree
$s+1$ and the condition $\eqref{0.2Dq}$ is replaced by
\begin{align}\label{0.2Dq-general}
\pi(x)\mathcal{D}_qP_n(x)=\sum_{k=-r} ^{s} c_{n,k}P_{n+k}(x)\;,
\end{align}
for positive integers $r$, $s$, and a polynomial $\pi$ which does not depend on n.
\end{conjecture}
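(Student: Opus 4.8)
The plan is to reduce the problem to an explicit system of recurrences for the scalar sequences involved and then to integrate that system. I would normalise $(P_n)_{n\ge 0}$ to be monic, with three-term recurrence $xP_n(x)=P_{n+1}(x)+\beta_nP_n(x)+\gamma_nP_{n-1}(x)$, $\gamma_n\neq 0$, and work with $\mathcal{D}_q$ from \eqref{0.3} together with its companion averaging operator $\mathcal{S}_q$, for which $\mathcal{D}_q(fg)=(\mathcal{S}_qf)(\mathcal{D}_qg)+(\mathcal{D}_qf)(\mathcal{S}_qg)$, $\mathcal{S}_q(fg)=(\mathcal{S}_qf)(\mathcal{S}_qg)+U(\mathcal{D}_qf)(\mathcal{D}_qg)$ with $U(x)=\frac{(q^{1/2}-q^{-1/2})^2}{4}(x^2-1)$, and $\mathcal{D}_qx=1$, $\mathcal{S}_qx=\alpha x$, $\alpha=\frac{q^{1/2}+q^{-1/2}}{2}$; I would also use the transposes $\mathbf{D}_q,\mathbf{S}_q$ acting on the moment functional $\mathbf{u}$ of $(P_n)$. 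Comparing degrees in \eqref{0.2Dq} already shows that the genuine case is $\deg\pi=2$; the subcases $\deg\pi\le 1$ and $a_n\equiv 0$ I would treat separately, expecting them to account for the ``limiting'' members (continuous $q$-Hermite and the like).

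The first key step is that \eqref{0.2Dq} forces $(P_n)$ to be classical on the lattice. Indeed, expanding the right-hand side of \eqref{0.2Dq} in the basis $(P_k)$ via the recurrence puts it into $\mathrm{span}\{P_{n-1},P_n,P_{n+1}\}$, which is $\mathbf{u}$-orthogonal to $\mathcal{P}_{n-2}$; hence $\langle\pi\mathbf{u},(\mathcal{D}_qP_n)P_m\rangle=0$ for $m\le n-2$, and since $\deg(\mathcal{D}_qP_n)=n-1$ this says (whenever $\pi\mathbf{u}$ is regular) that $(\mathcal{D}_qP_{n+1})_{n\ge0}$, rescaled to monic, is the monic OPS of $\pi\mathbf{u}$. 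In particular $\mathcal{D}_q$ maps $(P_n)$ to an orthogonal polynomial sequence, so by the lattice analogue of Hahn's theorem $(P_n)$ is classical: $\mathbf{u}$ obeys a Pearson-type equation $\mathbf{D}_q(\phi\mathbf{u})=\mathbf{S}_q(\psi\mathbf{u})$ with $\deg\phi\le2$, $\deg\psi=1$ and the usual admissibility condition on $\psi$. The parameter values for which $\pi\mathbf{u}$ fails to be regular must be handled separately and should again feed into the degenerate list.

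The second step extracts the information in \eqref{0.2Dq} beyond mere classicality. Since both $(\mathcal{D}_qP_{n+1})_{n\ge0}$, rescaled, and the ``derivative'' functional $\mathbf{u}^{[1]}$ of that sequence are determined by each other, the identity above forces $\mathbf{u}^{[1]}$ to be proportional to $\pi\mathbf{u}$ — that is, the passage $\mathbf{u}\mapsto\mathbf{u}^{[1]}$ is multiplication by a polynomial of degree at most $2$, rather than the generic transformation of the Askey--Wilson scheme. Running this through the explicit description of classical OPS on the $q$-quadratic lattice — where $\beta_n$ and $\gamma_n$ are known rational functions of $q^{\pm n}$ in the four Askey--Wilson parameters — converts ``$\mathbf{u}^{[1]}\propto\pi\mathbf{u}$'' into algebraic constraints on those parameters, which I would solve; together with the degenerate $\deg\pi$ and non-regular branches, the surviving families should be precisely the continuous $q$-Jacobi and Al-Salam--Chihara polynomials and their special and limiting cases, and a short direct computation then confirms that each of these does satisfy a relation of the shape \eqref{0.2Dq}. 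An equivalent, more computational route avoids the functional language altogether: apply $\mathcal{D}_q$ and $\mathcal{S}_q$ to the three-term recurrence, feed in \eqref{0.2Dq} at indices $n-1,n,n+1$, and equate coefficients in the $(P_k)$ basis to obtain a closed nonlinear system for $(a_n,b_n,c_n,\beta_n,\gamma_n)$, which one then integrates.

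I expect the main obstacle to be the combination of (i) the exhaustive case analysis — degree and root multiplicity of $\pi$, identical or sporadic vanishing of $a_n,b_n,c_n$, non-regularity of $\pi\mathbf{u}$, and small-index anomalies — each branch of which must be shown to be empty or to land in the claimed list, and (ii) the general statement \eqref{0.2Dq-general}: there the same argument yields only that $\mathbf{u}$ is semiclassical of class at most $s-1$ on the lattice (a Pearson equation $\mathbf{D}_q(\phi\mathbf{u})=\mathbf{S}_q(\psi\mathbf{u})$ with $\deg\phi$ allowed up to $s+1$), so one must first establish a classification of bounded-class functionals on the $q$-quadratic lattice and then intersect it with the narrow-bandwidth constraint imposed by $r$ and $s$, ruling out all the intermediate semiclassical families; making that argument uniform in $r$ and $s$ rather than case by case is where the real work lies, and I would expect to need an auxiliary lowering/shift argument on top of the semiclassical machinery to close it.
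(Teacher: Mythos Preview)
The statement you are trying to prove is a \emph{conjecture}, not a theorem, and the paper does not prove it; on the contrary, the paper (following \cite{KCDM2022}) \emph{disproves} its second part. Concretely, in the final remark the paper exhibits the continuous dual $q$-Hahn polynomials $H_n(x;s,-s,sq^{1/4}\mid q^{1/2})$ as a monic OPS satisfying a relation of type \eqref{0.2Dq-general} with $\deg\pi=2$, $r=1$, $s=2$, obtained from the structure relation \eqref{full-equation-for-continuousdual-qhahn-firstcase} via Lemma~\ref{lemma-for-alternative-relation}. These polynomials have three free parameters and are \emph{not} special or limiting cases of continuous $q$-Jacobi or Al-Salam--Chihara polynomials (the containment in the Askey scheme goes the other way), so the conclusion of the conjecture fails.

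Your strategy for the second part therefore cannot succeed. The step where it breaks is exactly where you say ``ruling out all the intermediate semiclassical families'': this is not possible, because at least one such family survives. Your instinct that the argument for \eqref{0.2Dq-general} only yields semiclassicality of bounded class, rather than classicality, is correct --- but that weaker conclusion is genuinely all one gets, and the gap between ``semiclassical of class $\le s-1$'' and ``classical'' is populated by actual counterexamples. For the first part \eqref{0.2Dq} your outline is broadly in the spirit of what is done in \cite{KCDMJP2021c} (reduce to classicality, then impose the extra constraint coming from the short right-hand side), and that part of the conjecture is indeed a theorem; but the present paper does not reprove it.
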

Although \eqref{0.2Dq} is a simple relation, this problem was only solved recently due to the complexity of the Askey-Wilson operator and his properties. For instance, it is proved in \cite{Al-Salam1995} (for the case $\pi(x)=1$) and in \cite{KCDMJP2021c} that the only solutions of \eqref{0.2Dq} are the some particular cases of the Al-Salam Chihara polynomials, the Chebyshev polynomials of the first kind and the continuous $q$-Jacobi polynomials. The second part of this conjecture is disproved in \cite{KCDM2022}, where the authors provide a counterexample to \eqref{0.2Dq-general}.

The motivation of this work is the following. We consider \eqref{0.2Dq} replacing the Askey-Wilson operator by the averaging operator. That is to characterize all orthogonal polynomials sequences $(P_n)_{n\geq 0}$ such that 
\begin{align}\label{equation-to-solve-general}
\pi(x)(\triangle +2\,\mathrm{I})P_n(x(s-1/2))=(a_n x+b_n)P_n(x)+c_n P_{n-1}(x)\;,
\end{align}
where $\,\mathrm{I}$ is the identity operator, $\pi$ a polynomial of a degree at most one, $x$ is a $q$-quadratic lattice given by $x(s)=(q^{-s}+q^s)/2$ and $\triangle f(s)=f(s+1)-f(s)$. This leads to characterization of some specific families of orthogonal polynomials sequences. The aim of this work is not only to find solutions of \eqref{equation-to-solve-general}, but also to obtain appropriate polynomials basis when dealing with problems related to the Askey-Wilson operator and the averaging operator. As we are going to see, Chebyshev polynomials constitute nice basis for the mentioned operators (see Remark \ref{remark-intro} below).


 Recall that the continuous monic dual $q$-Hahn polynomials, $(H_n(x;a,b|q))_{n\geq 0}$, satisfies the following three term recurrence relation (TTRR)
\begin{align*}
xH_n(x;a,b,c|q)=H_{n+1}(x;a,b,c|q)+a_nH_{n}(x;a,b,c|q)+b_nH_{n-1}(x;a,b,c|q)\;,
\end{align*}
where $a_n=(a+a^{-1}-a(1-q^n)(1-bcq^{n-1}) -(1-abq^n)(1-acq^n)/a)/2$ and $b_n=(1-abq^n)(1-acq^n)(1-bcq^n)(1-q^{n+1})/4$, while the monic Al-Salam-Chihara polynomials, $Q_n(x;c,d|q)$,
which depend on two parameters $c$ and $d$, are characterized by
$$
\begin{array}{rcl}
xQ_{n}(x;c,d|q)&=&Q_{n+1}(x;c,d|q)+\,\mbox{$\frac12$}\,(c+d)q^n\,Q_{n}(x;c,d|q) \\ [0.25em]
&&\displaystyle\, +\,\mbox{$\frac14$}\,(1-cd q^{n-1})(1-q^n)\,Q_{n-1}(x;c,d;q),
\end{array}
$$
$n=0,1,\ldots$, provided we define $Q_{-1}(x;c,d|q)=0=H_{-1}(x;a,b,c|q)$ (see e.g. \cite{IsmailBook2005}).

The structure of the paper is as follows. Section 2 contains some preliminary results and in Section 3 our main results are stated and proved. 

\section{Preliminary results}
Recall that a monic OPS $(P_n)_{n\geq 0}$ satisfies the following TTRR:
\begin{align}
x P_n(x)=P_{n+1}(x)+B_nP_n(x)+C_nP_{n-1}(x) \quad (n=0,1,2,\dots)\;,\label{TTRR}
\end{align}
with $P_{-1}(x)=0$ and $B_n\in \mathbb{C}$ and $C_{n+1} \in \mathbb{C}\setminus \left\lbrace 0\right\rbrace$ for each $n=0,1,2,\ldots$.  Hereafter, we denote $x=x(s)=(q^{s}+q^{-s})/2$ with $0<q<1$. Taking $e^{i\theta}=q^s$ in \eqref{0.3}, $\mathcal{D}_q$ reads
\begin{equation*}
\mathcal{D}_q f(x(s))= \frac{f\big(x(s+\frac{1}{2})\big)-f\big(x(s-\frac{1}{2})\big)}{x(s+\frac{1}{2})-x(s-\frac{1}{2})}.
\end{equation*}
We define an operator $\mathcal{S}_q$ by
\begin{equation*}
\mathcal{S}_q f(x(s))=\frac{f\big(x(s+\frac{1}{2})\big)+f\big(x(s-\frac{1}{2})\big)}{2}.
\end{equation*}
Let $f$ and $g$ be two polynomials. Define
\begin{align*}
\alpha= \frac{q^{1/2}+q^{-1/2}}{2},\quad \alpha_n= \frac{q^{n/2}+q^{-n/2}}{2},\quad  \gamma_n=\frac{q^{n/2}-q^{-n/2}}{q^{1/2}-q^{-1/2}} \quad (n=0,1,\dots)\;.
\end{align*} The following properties are well known \cite{KCDMJP2022, IsmailBook2005}.
\begin{align}
\mathcal{D}_q \big(fg\big)&= \big(\mathcal{D}_q f\big)\big(\mathcal{S}_q g\big)+\big(\mathcal{S}_q f\big)\big(\mathcal{D}_q g\big), \label{def-Dx-fg} \\[7pt]
\mathcal{S}_q \big( fg\big)&=\big(\mathcal{D}_q f\big) \big(\mathcal{D}_q g\big)\texttt{U}_2  +\big(\mathcal{S}_q f\big) \big(\mathcal{S}_q g\big), \label{def-Sx-fg}
\end{align}
where $\texttt{U}_2(x)=(\alpha^2 -1)(x^2-1)$.
It is proved by induction in \cite[Proposition 2.1]{KCDMJP2022} that 
\begin{align}
\mathcal{D}_q x^n =\gamma_n x^{n-1}+\frac{n\gamma_{n-2}-(n-2)\gamma_n}{4} x^{n-3}+\cdots \quad (n=0,1,\dots)\;. \label{Dx-xn}
\end{align}

%

It is very intricate how Conjecture \ref{IsmailConj2478} was made, specially the second part. This is why we find useful to start by giving the connection between our considered structure relation \eqref{equation-to-solve-general} with equation \eqref{0.2Dq-general} appearing in the second part of the conjecture.

\begin{lemma}\label{lemma-for-alternative-relation}
Let $(P_n)_{n\geq0}$ be a monic OPS satisfying the following equation
\begin{align}\label{equation-explicit-to solve-general}
(ax-c)\mathcal{S}_qP_n(x)=(a_nx+b_n)P_n(x)+c_nP_{n-1}(x)\;.
\end{align}
Then $(P_n)_{n\geq 0}$ satisfies the following other relation
\begin{align}\label{equation-alternative-with Dx}
(ax-c)\mathtt{U}_2(x) \mathcal{D}_q P_n (x)&=r_n ^{[1]} P_{n+2}(x)+r_n ^{[2]} P_{n+1}(x)+r_n ^{[3]}P_n (x)\\
&+r_n ^{[4]}P_{n-1}(x)+r_n ^{[5]}P_{n-2}(x)\;, \nonumber
\end{align}
for each $n=0,1,2,\ldots$, where
\begin{align*}
r_n ^{[1]}&= a_{n+1}-\alpha a_n,\\
 r_n ^{[2]}&= g_{n+1}-\alpha g_n+a_n(B_n-\alpha B_{n+1}),\\
r_n ^{[3]}&= s_{n+1}-\alpha s_n+g_n(1-\alpha) B_{n}+a_{n-1}C_n-\alpha a_nC_{n+1},\\
r_n ^{[4]}&= (g_{n-1}-\alpha g_n)C_{n}+s_n(B_n-\alpha B_{n-1}),\\
 r_n ^{[5]}&= C_ns_{n-1}-\alpha C_{n-1}s_n,
\end{align*}
and $g_n =b_n +a_n B_n$, $s_n =c_n +a_n C_n$.
\end{lemma}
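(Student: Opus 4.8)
The plan is to combine the three-term recurrence relation \eqref{TTRR} with the product rules \eqref{def-Dx-fg}--\eqref{def-Sx-fg}, using only the elementary facts $\mathcal{D}_q x=1$ (immediate from the definition of $\mathcal{D}_q$) and $\mathcal{S}_q x=\alpha x$ (which follows from $x(s+\tfrac12)+x(s-\tfrac12)=(q^{1/2}+q^{-1/2})\,x(s)$, a one-line consequence of $x(s)=(q^s+q^{-s})/2$ together with $\alpha=(q^{1/2}+q^{-1/2})/2$). As a preliminary simplification I would substitute $xP_n=P_{n+1}+B_nP_n+C_nP_{n-1}$ into the right-hand side of \eqref{equation-explicit-to solve-general} to rewrite the hypothesis in the cleaner form
\begin{align}\label{eq:plan-clean-form}
(ax-c)\mathcal{S}_qP_n(x)=a_nP_{n+1}(x)+g_nP_n(x)+s_nP_{n-1}(x),
\end{align}
where $g_n=b_n+a_nB_n$ and $s_n=c_n+a_nC_n$, exactly as in the statement.

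The key step is to apply $\mathcal{S}_q$ to \eqref{TTRR}. On the one hand, \eqref{def-Sx-fg} with $f=x$ and $g=P_n$ gives $\mathcal{S}_q(xP_n)=\mathtt{U}_2(x)\,\mathcal{D}_qP_n+\alpha x\,\mathcal{S}_qP_n$; on the other hand, applying $\mathcal{S}_q$ termwise to the right-hand side of \eqref{TTRR} yields $\mathcal{S}_qP_{n+1}+B_n\mathcal{S}_qP_n+C_n\mathcal{S}_qP_{n-1}$. Equating the two expressions and solving for the $\mathcal{D}_q$-term,
\begin{align*}
\mathtt{U}_2(x)\,\mathcal{D}_qP_n(x)=\mathcal{S}_qP_{n+1}(x)+B_n\mathcal{S}_qP_n(x)+C_n\mathcal{S}_qP_{n-1}(x)-\alpha x\,\mathcal{S}_qP_n(x).
\end{align*}
Next I would multiply through by $ax-c$ (so the left-hand side becomes exactly that of \eqref{equation-alternative-with Dx}) and invoke \eqref{eq:plan-clean-form} at the indices $n+1$, $n$, $n-1$: the first three terms on the right turn into explicit linear combinations of $P_{n+2},P_{n+1},\dots,P_{n-2}$, while the last term becomes $-\alpha x\,\bigl(a_nP_{n+1}+g_nP_n+s_nP_{n-1}\bigr)$, which I reduce to the basis $(P_k)$ by one further application of \eqref{TTRR} to each of $P_{n+1}$, $P_n$, $P_{n-1}$.

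Finally, collecting the coefficients of $P_{n+2},P_{n+1},P_n,P_{n-1},P_{n-2}$ and simplifying should reproduce $r_n^{[1]},\dots,r_n^{[5]}$; for instance the coefficient of $P_{n+1}$ equals $g_{n+1}+a_nB_n-\alpha(a_nB_{n+1}+g_n)=g_{n+1}-\alpha g_n+a_n(B_n-\alpha B_{n+1})=r_n^{[2]}$, and the other four are obtained in the same way. Conceptually the argument is short; the only genuine labor — and the step most prone to arithmetic slips — is this last linear reduction and the term-matching, together with a routine check that the low-index cases $n=0,1$ cause no trouble, which they do not, since the coefficients one would need to worry about multiply $P_{-1}=P_{-2}=0$.
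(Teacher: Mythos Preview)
Your proposal is correct and follows essentially the same route as the paper: apply $\mathcal{S}_q$ to the three-term recurrence via \eqref{def-Sx-fg} to express $\mathtt{U}_2\,\mathcal{D}_qP_n$ in terms of $\mathcal{S}_qP_{n+1}$, $\mathcal{S}_qP_n$, $\mathcal{S}_qP_{n-1}$ and $x\,\mathcal{S}_qP_n$, then multiply by $ax-c$, substitute the hypothesis, and reduce the remaining $x$-factors with \eqref{TTRR}. Your preliminary rewriting of the hypothesis as $(ax-c)\mathcal{S}_qP_n=a_nP_{n+1}+g_nP_n+s_nP_{n-1}$ is a clean touch that the paper leaves implicit, but otherwise the arguments coincide.
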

\begin{proof}
Let $(P_n)_{n\geq0}$ be a monic OPS satisfying \eqref{equation-explicit-to solve-general}.
From the TTRR \eqref{TTRR} fulfilled by the monic OPS $(P_n)_{n\geq 0}$ satisfying \eqref{equation-explicit-to solve-general}, we apply the operator $\mathcal{S}_q$ using \eqref{def-Sx-fg} to obtain the following equation
$$\mathtt{U}_2(x)\mathcal{D}_qP_n(x)=-\alpha x\mathcal{S}_qP_n(x)+\mathcal{S}_qP_{n+1}(x)+B_n\mathcal{S}_qP_n(x)+C_n\mathcal{S}_qP_{n-1}(x)\;.$$
Hence \eqref{equation-alternative-with Dx} holds by multiplying the above equation by the polynomial $ax-c$ and using \eqref{equation-explicit-to solve-general} together with the TTRR \eqref{TTRR}. 
\end{proof}

Next, we show that the coefficients of the associated TTRR to the monic OPS, $(P_n)_{n\geq 0}$, satisfying \eqref{equation-explicit-to solve-general}  fulfill a system of non linear equations that will be solved in the next section.

\small{
\begin{lemma}\label{system-in-genral-form-}
Let $(P_n)_{n\geq0}$ be a monic OPS satisfying \eqref{equation-explicit-to solve-general}. Then the coefficients $B_n$ and $C_{n}$ of the TTRR \eqref{TTRR} satisfied by $(P_n)_{n\geq0}$ fulfill the following system of difference equations: 
\begin{align}
&\label{eq1S-eq2S} a_{n+2}-2\alpha a_{n+1}+a_n=0\;,\; t_{n+2} -2\alpha t_{n+1}+t_{n} =0\;,\\
&\nonumber \quad\quad\quad\quad\quad t_n =\frac{c_n}{C_n} =k_1 q^{n/2}+k_2q^{-n/2}\;,  \\
&r_{n+3}B_{n+2} -(r_{n+2} +r_{n+1})B_{n+1}  +r_n B_n =0,\quad r_n= t_n +a_n -a_{n-1}\;,  \label{eq3S} \\
&\label{eq4S} r_n \left( B_{n} ^2 -2\alpha B_{n}B_{n-1} +B_{n-1}^2   \right)\\
&\nonumber \quad \quad\quad\quad\quad=(r_{n+1}+r_{n+2})(C_{n+1}-1/4) -2(1+\alpha)r_n(C_n -1/4)\\
 &\nonumber \quad\quad\quad\quad\quad+(r_{n-1}+r_{n-2})(C_{n-1} -1/4)\;, \\
&\label{eq5S} (1 -\alpha^2)b_n=2(1-\alpha)(a_nB_n+b_n)B_n ^2+(t_{n+1}\\
&\nonumber  \quad\quad\quad\quad\quad+a_{n+1}-a_{n+2})B_{n+1}C_{n+1}+(t_n+a_{n-1}-a_{n-2})B_{n-1}C_n \\
&\nonumber  \quad\quad\quad\quad\quad+\Big[(2a_n-a_{n+2}-a_{n-1})C_{n+1}+(2a_n -a_{n+1}-a_{n-2})C_n\\
&\nonumber \quad\quad\quad\quad\quad +(1-2\alpha)(c_n+c_{n+1})+(\alpha ^2 -1)a_n  \Big]B_n +2(b_n -\alpha b_{n+1} )C_{n+1}\\
&\nonumber \quad \quad\quad\quad\quad +2(b_n -\alpha b_{n-1})C_{n}\;.
\end{align}
In addition, the following relations hold:
\begin{align}
&b_n =\alpha_n\;,\quad c_n =\left(\alpha_n -\alpha_{n-1}  \right)\sum_{j=0} ^{n-1} B_j\;,\quad\quad~ ~~\mbox{\rm if}~~ a=0,~c=-1\;, \label{power-pi1}\\
&a_n=\alpha_n\;,\quad b_n =-\alpha_n c+\left(\alpha_n-\alpha_{n-1} \right)\sum_{j=0} ^{n-1} B_j \;,\quad ~~\mbox{\rm if}~~a=1\;. \label{power-pi2}
\end{align}
\end{lemma}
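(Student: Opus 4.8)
The plan is to derive the system by expanding both sides of \eqref{equation-alternative-with Dx} (the consequence of \eqref{equation-explicit-to solve-general} proved in Lemma~\ref{lemma-for-alternative-relation}) in the basis $(P_n)_{n\geq0}$ and matching coefficients, while simultaneously exploiting the leading-coefficient information coming from \eqref{Dx-xn}. First I would compare the two sides of \eqref{equation-explicit-to solve-general} at the level of top-degree terms: since $\mathcal{S}_qP_n(x)=P_n(x)+\text{lower order}$ with a controllable subleading term (obtainable from the analogue of \eqref{Dx-xn} for $\mathcal{S}_q$, or directly from \eqref{def-Sx-fg} applied to the TTRR, as in the proof of Lemma~\ref{lemma-for-alternative-relation}), the coefficients $a_n,b_n,c_n$ are forced to have a specific asymptotic shape. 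In particular the $P_{n+1}$-coefficient on the right of \eqref{equation-alternative-with Dx} must vanish appropriately, and tracking the highest powers gives the recurrence $a_{n+2}-2\alpha a_{n+1}+a_n=0$ and the companion relation for $t_n=c_n/C_n$; solving these linear constant-coefficient recurrences (characteristic roots $q^{\pm1/2}$) yields $t_n=k_1q^{n/2}+k_2q^{-n/2}$ and pins down $a_n$ similarly. This also explains \eqref{power-pi1}--\eqref{power-pi2}: when $a=0,c=-1$ the relation \eqref{equation-explicit-to solve-general} forces $a_n=0$, hence $r_n^{[1]}=0$ automatically and the remaining normalization (matching the $P_{n+1}$ and constant coefficients degree by degree) gives $b_n=\alpha_n$ and the telescoped formula for $c_n$ via $\sum B_j$; the case $a=1$ is handled the same way with $a_n=\alpha_n$.

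Next I would impose $r_n^{[1]}=\cdots=r_n^{[5]}$-consistency more carefully. The point is that \eqref{equation-alternative-with Dx} is an identity obtained by applying $\mathcal{D}_q$, but it must be \emph{compatible} with the original $\mathcal{S}_q$-relation: substituting the TTRR \eqref{TTRR} repeatedly into \eqref{equation-alternative-with Dx} and re-expanding, one obtains an overdetermined identity in the basis $(P_n)$, and equating the coefficient of each $P_{n+k}$ to zero produces \eqref{eq3S}, \eqref{eq4S} and \eqml{eq5S}. Concretely, I expect \eqref{eq3S} to come from the $P_{n+1}$-coefficient (a linear relation in the $B_n$'s weighted by $r_n=t_n+a_n-a_{n-1}$, which is exactly the recombination of $r_n^{[1]},r_n^{[2]}$ after the forced relations on $a_n$ are used), \eqref{eq4S} from the $P_n$-coefficient (quadratic in the $B_n$'s, linear in the $C_n-1/4$'s — the shift by $1/4$ being the trace of $\mathtt{U}_2(x)=(\alpha^2-1)(x^2-1)$ evaluated against $x^2\mapsto C_{n+1}+C_n+B_n^2$ type identities), and \eqref{eq5S} from the lowest-order matching, which is why it carries the $b_n$-terms explicitly and is the most cumbersome.

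The main obstacle will be the bookkeeping in deriving \eqref{eq5S}: one has to expand $\mathtt{U}_2(x)\mathcal{D}_qP_n$ and each $P_{n+k}$ using the TTRR down to the constant term, keep track of all the $a_n,b_n,c_n,B_n,C_n$ cross-terms, and then use the already-established relations \eqref{eq1S-eq2S}--\eqref{eq3S} together with $g_n=b_n+a_nB_n$, $s_n=c_n+a_nC_n$ to collapse the expression into the stated form. A clean way to organize this is to work with the generating identity from Lemma~\ref{lemma-for-alternative-relation} and systematically replace $\mathcal{S}_qP_{n\pm1}$ in the intermediate equation $\mathtt{U}_2(x)\mathcal{D}_qP_n(x)=-\alpha x\mathcal{S}_qP_n(x)+\mathcal{S}_qP_{n+1}(x)+B_n\mathcal{S}_qP_n(x)+C_n\mathcal{S}_qP_{n-1}(x)$ by $(a_mx+b_m)P_m+c_mP_{m-1}$ divided by $ax-c$, clearing denominators once and for all; then every coefficient identity is polynomial in $x$ and the system falls out by reading off powers of $x$. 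I would present the derivation of \eqref{eq1S-eq2S} in full, sketch \eqref{eq3S}--\eqref{eq4S}, and for \eqref{eq5S} indicate the matching step and relegate the algebra. The relations \eqref{power-pi1}--\eqref{power-pi2} I would dispatch separately at the start, since in those two normalizations $\pi$ is degree $\le1$ with a specific root and the computation is short.
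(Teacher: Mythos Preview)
Your treatment of \eqref{power-pi1}--\eqref{power-pi2} via leading and subleading monomial coefficients is fine and matches the paper. The problem is the derivation of the system \eqref{eq1S-eq2S}--\eqref{eq5S}.

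You propose to extract the constraints from \eqref{equation-alternative-with Dx} alone, by ``substituting the TTRR repeatedly and re-expanding'' and by ``imposing $r_n^{[1]}=\cdots=r_n^{[5]}$-consistency'', or by revisiting the intermediate identity $\mathtt{U}_2(x)\mathcal{D}_qP_n=-\alpha x\,\mathcal{S}_qP_n+\mathcal{S}_qP_{n+1}+B_n\mathcal{S}_qP_n+C_n\mathcal{S}_qP_{n-1}$ and replacing each $\mathcal{S}_qP_m$ via \eqref{equation-explicit-to solve-general}. But that intermediate identity (obtained by applying $\mathcal{S}_q$ to the TTRR) together with \eqref{equation-explicit-to solve-general} is precisely how \eqref{equation-alternative-with Dx} was \emph{derived} in Lemma~\ref{lemma-for-alternative-relation}; the coefficients $r_n^{[j]}$ are the output of that substitution, not constraints. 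Since \eqref{equation-alternative-with Dx} is a consequence of \eqref{equation-explicit-to solve-general}, no amount of re-expansion in the basis $(P_n)$ can make it overdetermined. There is no ``compatibility'' to impose between a relation and its own corollary, and in particular there is no reason from your argument for $a_{n+2}-2\alpha a_{n+1}+a_n$ to vanish.

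What is missing is a \emph{second, independent} identity. The paper obtains it by applying $\mathcal{D}_q$ (not $\mathcal{S}_q$) to the TTRR, which via \eqref{def-Dx-fg} gives
\[
\mathcal{S}_qP_n(x)=-\alpha x\,\mathcal{D}_qP_n(x)+\mathcal{D}_qP_{n+1}(x)+B_n\mathcal{D}_qP_n(x)+C_n\mathcal{D}_qP_{n-1}(x).
\]
Now multiply by $(ax-c)\mathtt{U}_2(x)$: on the left use \eqref{equation-explicit-to solve-general} to replace $(ax-c)\mathcal{S}_qP_n$, and on the right use \eqref{equation-alternative-with Dx} at indices $n-1,n,n+1$ to replace each $(ax-c)\mathtt{U}_2(x)\mathcal{D}_qP_m$; the remaining factor $x$ is absorbed by the TTRR. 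Both sides are then finite combinations of $P_{n+3},\ldots,P_{n-3}$, and equating the seven coefficients yields \eqref{eq1S-eq2S} (from the two extreme coefficients) and five further equations, from which \eqref{eq3S}--\eqref{eq5S} follow by shifting $n$ and recombining with \eqref{eq1S-eq2S}. Your outline never invokes this $\mathcal{D}_q$-differentiated TTRR, so it has no mechanism to produce any of \eqref{eq1S-eq2S}--\eqref{eq5S}.
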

}
\begin{proof}
Let $(P_n)_{n\geq0}$ be a monic OPS satisfying \eqref{equation-explicit-to solve-general}.
Applying the operator $\mathcal{D}_q$ to both sides of (\ref{TTRR}) and using \eqref{def-Dx-fg}, we deduce
\begin{align*}
\mathcal{S}_q P_n(x) =- \alpha x\mathcal{D}_q P_n(x) +\mathcal{D}_q P_{n+1}(x) +B_n \mathcal{D}_q P_n(x) +C_n \mathcal{D}_q P_{n-1}(x)\;.
\end{align*}
Multiplying both sides of this equality by $(ax-c)\mathtt{U}_2(x)$ and then using successively (\ref{equation-explicit-to solve-general}), (\ref{equation-alternative-with Dx}), and (\ref{TTRR}), we obtain
a vanishing linear combination of the polynomials $P_{n+3}, P_{n+2}, \dots,P_{n-3}$.
Thus, setting
$t_n=c_n/C_n$ for $n=1,2,3,\ldots$,  
after straightforward computations we obtain \eqref{eq1S-eq2S} together with the following equations:
\begin{align}
&\label{seq3S} (a_{n+1}-a_{n+2})B_{n+1} +(a_n -a_{n-1})B_n +b_{n+2}-2\alpha b_{n+1}+b_n =0 , \\
&\label{seq4S} (a_{n+1}-a_{n+2}-t_{n+2})B_{n+1} +(a_{n}-a_{n-1}+t_{n+1}+t_{n})B_n -t_{n-1} B_{n-1} \\ 
&\nonumber  \quad\quad  +b_{n+1}-2\alpha b_{n}+b_{n-1} =0 , \\
&\label{seq5S} (a_{n+1}-a_{n+2})B_{n+1}^2+2(1-\alpha)a_nB_n^2+(a_n-a_{n-1})B_n B_{n+1}+ (a_n -a_{n+2})C_{n+1}  \\ 
&\nonumber \quad +(b_{n+1}+b_n-2\alpha b_{n+1})B_{n+1} +(b_{n+1}+b_n -2\alpha b_{n} )B_n  +(a_n - a_{n-2})C_n\\ 
&\nonumber \quad +c_{n+2}-2\alpha c_{n+1}+c_n  = (1-\alpha ^2)a_n ,\\
&\label{seq6S} (2(1-\alpha)a_n+t_n)B_{n}^2 + (t_n +a_{n-1}-a_{n-2})B_{n-1}^2+(b_{n} +b_{n-1} -2\alpha b_{n})B_{n} \\ 
&\nonumber \quad +(a_n-t_{n-1}-t_{n+1}-a_{n+1})B_n B_{n-1} +(b_{n-1}+b_n -2\alpha b_{n-1} )B_{n-1} \\ 
&\nonumber \quad +(a_n-a_{n+2}-t_{n+2}-t_{n+1})C_{n+1}+( 2(1+\alpha)t_n+a_n-a_{n-2})C_n  \\
&\nonumber \quad -(t_{n-2}+t_{n-1})C_{n-1} +c_{n+1}-2\alpha c_{n}+c_{n-1} = (1-\alpha^2)(t_n+a_n)  ,\\
&\label{seq7S} 2(1-\alpha)a_nB_n^3 +2(1-\alpha)b_nB_{n}^2 +\left[ (2a_n -a_{n+2}-a_{n-1})C_{n+1}\right.\\
&\nonumber \quad \left. + (2a_n -a_{n+1}-a_{n-2})C_n  + c_{n+1}-2\alpha c_n +c_n -2\alpha c_{n+1}  - (1-\alpha^2) a_n \right] B_{n} \\ 
&\nonumber \quad +(c_{n+1}+a_{n+1}C_{n+1}-a_{n+2}C_{n+1})B_{n+1} +(c_n +a_{n-1}C_{n}-a_{n-2}C_n)B_{n-1} \\ 
&\nonumber \quad +2(b_n -\alpha b_{n+1})C_{n+1} +2(b_n -\alpha b_{n-1} )C_n   = (1 -\alpha^2)b_n .
\end{align}

\eqref{eq3S} (respectively, \eqref{eq4S}) is obtained by shifting $n$ to $n+1$ in \eqref{seq4S} (respectively, \eqref{seq6S}) and combining it with \eqref{seq3S} (respectively, \eqref{seq5S}) and by using \eqref{eq1S-eq2S}. \eqref{eq5S} follows from \eqref{eq1S-eq2S} and \eqref{seq7S}. Now suppose that $a=1$. Using \eqref{TTRR}, we may write $ P_n (x)=x^n -x^{n-1}\sum_{j=0} ^{n-1} B_j +w_nx^{n-2}+\cdots$, for some complex sequence $(w_n)_{n\geq0}$. Using \eqref{Dx-xn}, we compare the two first coefficients of higher power of $n$ in both side of \eqref{equation-explicit-to solve-general} to deduce \eqref{power-pi2}. \eqref{power-pi1} is obtained in a similar way and this completes the proof.  
\end{proof}

\begin{remark}
According to the previous lemma, the coefficients $B_n$ and $C_n$ of the TTRR \eqref{TTRR} of any monic OPS $(P_n)_{n\geq 0}$ fulfilling \eqref{equation-explicit-to solve-general} must fulfill \eqref{eq1S-eq2S}--\eqref{eq5S}. However, for each concrete polynomial $\pi(x)=ax-c$ appearing in \eqref{equation-explicit-to solve-general}, we need to take into account some initial conditions which will be specified in the proof of the main result in all situations according to the degree of $\pi$. 
Indeed, for instance, it is clear that
$$B_n=0,\quad \quad C_{n+1}=1/4\quad (n=0,1,2,\ldots)\;,$$ provide a solution of the system \eqref{eq1S-eq2S}--\eqref{eq5S}. The corresponding monic OPS is
$$P_n (x)=U_n \left(x\right)~\quad~~(n=0,1,2,\ldots)\;, $$ where $(U_n)_{n \geq 0}$ is the monic Chebyschev polynomials of the second kind. However this sequence $(P_n)_{n \geq 0}$ does not provide a solution of \eqref{equation-explicit-to solve-general}  (see below \eqref{initialcondition-C1-firstcase} for the case $\pi(x)=1$ and \eqref{why-chebyshev-first-kind} for the case $\pi(x)=x-c$). 
\end{remark}

 The system of equations \eqref{eq1S-eq2S}--\eqref{eq5S} is non-linear and so, in general it is not easy to solve. Nevertheless, the same system was solved in \cite{KCDMJP2021c} for the case where $(P_n)_{n\geq 0}$ is a classical orthogonal polynomial and so, it was possible among this known class of orthogonal polynomials to find those which satisfy \eqref{eq1S-eq2S}--\eqref{eq5S}. For the present case the task is more harder because we do not have such information and even the initial conditions are different. We will see that some patterns appear associated with the system of equations \eqref{eq1S-eq2S}--\eqref{eq5S} which will allow us to solve the system for each possible case of the degree of the polynomial $\pi$. 

Recall that from \eqref{eq1S-eq2S}, we have 
\begin{align}\label{general-tn-struc}
t_n = \frac{c_n}{C_n} =k_1 q^{n/2}+k_2q^{-n/2}\quad (n=1,2,3,\ldots)\;,
\end{align} 
where $k_1$ and $k_2$ are two complex numbers. Setting $c_0=C_0=0$, we define $$t_0:=k_1+k_2\;,$$ by compatibility with \eqref{general-tn-struc}. Hereafter we assume that $$r_n=t_n+a_n-a_{n-1}\neq 0\;\quad (n=0,1,2\ldots)\;,$$ where $r_n$ is defined in \eqref{eq3S}.

\subsection{Case $\pi(x)=1$}
In this case, \eqref{equation-explicit-to solve-general} can be rewritten as
\begin{align}
\mathcal{S}_q P_n(x)=\alpha_nP_n(x)+c_nP_{n-1}(x)\quad (n=0,1,2,\ldots)\;. \label{case-pi-1}
\end{align}  
  
\begin{lemma}\label{uniqueness-case-1}
Let $(P_n)_{n\geq0}$ be a monic OPS satisfying \eqref{case-pi-1}. Then 
\begin{align}
\left( c_2 C_1-q^{-1/2}c_1C_2\right) \left(c_2C_1-q^{1/2}c_1C_2\right)c_1  = 0\;. \label{uniqueness-cond-case-1}
\end{align}
\end{lemma}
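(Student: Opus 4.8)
The plan is to specialize the general system \eqref{eq1S-eq2S}--\eqref{eq5S} to the case $\pi(x)=1$, where $a=0$ and $c=-1$, and extract the low-index consequences. In this case relation \eqref{power-pi1} gives $b_n=\alpha_n$ and $a_n=0$ for all $n$, so that $r_n=t_n$ and $g_n=b_n=\alpha_n$. The first step is therefore to substitute $a_n\equiv 0$, $b_n=\alpha_n$ into \eqref{seq3S}--\eqref{seq7S} (or equivalently into \eqref{eq3S}--\eqref{eq5S}) and record the resulting simplified recurrences relating $B_n$, $C_n$ and $t_n=c_n/C_n$. The key identity to keep at hand is $\alpha_{n+1}-2\alpha\alpha_n+\alpha_{n-1}=0$, which kills the $b$-difference terms in \eqref{seq3S} and \eqref{seq4S}; with $a_n\equiv 0$ this forces the surviving terms of those two equations to vanish, yielding a clean relation among $t_{n-1},t_n,t_{n+1}$ and $B_{n-1},B_n,B_{n+1}$.

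Next I would exploit that $c_0=C_0=0$, so the recurrences must be read starting from $n=0$ with the convention $t_0=k_1+k_2$ established after \eqref{general-tn-struc}. Writing out \eqref{seq4S} (shifted appropriately) at $n=0$ and $n=1$, together with \eqref{seq3S} at the corresponding indices, produces a small finite set of polynomial equations in the unknowns $B_0,B_1$, $C_1,C_2$, $c_1,c_2$ (recall $t_1=c_1/C_1$, $t_2=c_2/C_2$ and $t_1,t_2$ already have the form $k_1q^{\pm 1/2}+k_2q^{\mp 1/2}$). The structure relation \eqref{case-pi-1} itself, evaluated at $n=0,1,2$ by comparing the top two or three coefficients after applying $\mathcal{S}_q$ via \eqref{Dx-xn} and \eqref{def-Sx-fg}, supplies the initial data $B_0$, $C_1$ in terms of $q$; this is the analogue of the initial conditions the remark promises will be "specified in the proof of the main result," and I would either quote those or rederive them here. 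The point is that $t_1$ and $t_2$ are not free: $t_1=q^{1/2}k_1+q^{-1/2}k_2$ and $t_2=qk_1+q^{-1}k_2$, so $q^{1/2}t_2-t_1=(q-q^{1/2})\cdot q^{1/2}k_1 \cdot(\text{something})$-type eliminations express $k_1,k_2$ — equivalently $c_1/C_1$ and $c_2/C_2$ — through each other. Concretely, $q^{1/2}t_1-t_2$ and $q^{-1/2}t_1-t_2$ are each proportional to a single $k_i$, and these are exactly the two factors $c_2C_1-q^{\mp 1/2}c_1C_2$ appearing in \eqref{uniqueness-cond-case-1} after clearing the denominator $C_1C_2$.

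So the endgame is: combine the $n=0$ and $n=1$ instances of the simplified \eqref{seq3S}/\eqref{seq4S} (and, if needed, \eqref{seq5S}) to obtain a single scalar equation whose left-hand side, once the known values of $B_0,C_1$ are inserted and everything is cleared of denominators, factors as $c_1(c_2C_1-q^{-1/2}c_1C_2)(c_2C_1-q^{1/2}c_1C_2)=0$. The factor $c_1$ records the degenerate branch where the relation \eqref{case-pi-1} is trivial at level one, and the two quadratic-type factors record the two admissible "shapes" of $t_n$ (one for $k_2=0$, one for $k_1=0$), which will correspond in the main theorem to the two distinct families (Al-Salam--Chihara / Chebyshev on one side, continuous dual $q$-Hahn on the other). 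I expect the main obstacle to be bookkeeping: making the substitutions $a_n\equiv0$, $b_n=\alpha_n$, $t_n=k_1q^{n/2}+k_2q^{-n/2}$ consistently across the shifted indices in \eqref{seq4S}, and correctly pinning down $B_0$ and $C_1$ from \eqref{case-pi-1} so that the final polynomial genuinely factors as claimed rather than merely having those three factors as components of a larger product. Once the substitution is done carefully the factorization should fall out of a resultant-type elimination between the two independent relations in $(C_1,C_2,c_1,c_2)$.
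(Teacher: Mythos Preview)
Your plan has a genuine gap at the step where you hope to ``pin down $B_0$ and $C_1$ from \eqref{case-pi-1}'' and then see the cubic factorization drop out of a low-index elimination. In the $\pi(x)=1$ case, evaluating \eqref{case-pi-1} at $n=1,2,3$ does not fix $B_0$ and $C_1$ to specific $q$-dependent constants: one only gets $c_1=(\alpha-1)B_0$ and relations such as $(\alpha+1)(2C_1-1)=(B_1-(2\alpha+1)B_0)B_0$, so $B_0,B_1,C_1,C_2$ remain genuinely free subject to a handful of constraints. With these parameters still floating, the finitely many equations you can write at $n=0,1,2$ do not force the product $k_1k_2B_0$ to vanish, and a resultant-style elimination among them will not produce the desired factored polynomial.

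The paper's argument is structurally different and uses information from \emph{all} $n$. It proceeds by contradiction: assuming $k_1k_2B_0\neq0$, one first solves \eqref{eq3S} in closed form for $B_n$ (formula \eqref{explicit-Bn-firstcase}), then uses \eqref{power-pi1} and $c_n=t_nC_n$ to get closed forms for $c_n$ and $C_n$. The crucial step is then \emph{asymptotic}: since $0<q<1$, one lets $n\to\infty$ in \eqref{eq4S} and in \eqref{eq5S} to obtain $\lim C_n=1/4$ and a second limit relation; matching these forces a constraint on the auxiliary constant $K_b$. Each of the two resulting branches ($K_b=0$ or the complementary one) is then shown to be incompatible with the initial conditions coming from $n=2,3$ in \eqref{case-pi-1}. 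In short, the proof needs the explicit $n$-dependent solution of the recurrences together with limits as $n\to\infty$, not just a finite algebraic elimination at small $n$; your proposal is missing this asymptotic ingredient.
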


\begin{proof}
Since $(P_n)_{n\geq0}$ satisfies \eqref{case-pi-1}, then $a_n =0$, for each $n=0,1,2,\ldots$, and by \eqref{eq1S-eq2S}  and \eqref{power-pi1}, we obtain $c_1=(\alpha-1)B_0$ and
\begin{align}
t_n =k_1q^{n/2}+k_2q^{-n/2},~ k_1 = \frac{c_2 C_1-q^{-1/2}c_1C_2}{(q-1)C_1C_2},~ k_2= \frac{c_2C_1-q^{1/2}c_1C_2}{(q^{-1}-1)C_1C_2} .\label{tn-pi-1}
\end{align}
Suppose, contrary to our claim, that \eqref{uniqueness-cond-case-1} does not hold. This means that $k_1k_2B_0\neq 0$. We claim that the following relations hold.
\begin{align}
&(\alpha+1)(2C_1-1)=\big(B_1-(2\alpha+1)B_0\big)B_0\;,\label{initialcondition-C1-firstcase}\\
&(2\alpha+1)B_0C_2 -B_0(B_0+B_1)B_2=\frac{1}{2}(B_0+B_1)\left(2\alpha+1 -u\frac{B_0+B_1}{\alpha+1}B_0  \right)\;,\label{initialcondit-B2-C2-firstcase}\\
&\alpha(\alpha+1)(4C_2-1)-(2\alpha+1)(B_0+B_1)B_2=\big(B_0-uB_1\big)(B_0+B_1)\;,\label{initialcondit-B2-firstcase}
\end{align}
where $u=4\alpha^2+2\alpha-1$. Indeed, taking $n=2$ in \eqref{case-pi-1}, and using \eqref{TTRR} and \eqref{Dx-xn}, we obtain $B_0B_1-C_1+1-\alpha^2=b_2(B_1B_0-C_1)-c_2B_0$ and therefore \eqref{initialcondition-C1-firstcase} holds using \eqref{power-pi1}. Similarly, taking $n=3$ in \eqref{case-pi-1}, we obtain the following equations.
\begin{align*}
\alpha(\alpha+1)(4C_1+4C_2-3)&=(2\alpha+1)(B_0B_1+B_0B_2+B_1B_2) \\
&-(4\alpha^2+2\alpha-1)(B_0^2+B_0B_1+B_1^2)\;,\\
(2\alpha+1)^2(B_0C_2+B_2C_1)&=(\alpha+1)(2B_0B_1B_2+B_0+B_1+B_2)\\
&+(4\alpha^2+2\alpha-1)\big((B_0+B_1+B_2)C_1-(B_0+B_1)B_0B_1\big)\;.
\end{align*}
Thus \eqref{initialcondit-B2-C2-firstcase}--\eqref{initialcondit-B2-firstcase} follow from these equations using \eqref{initialcondition-C1-firstcase}.
Set $r=-k_1/k_2$. We write \eqref{eq3S} as $t_{n+3}B_{n+2}-t_{n+1}B_{n+1}=t_{n+2}B_{n+1}-t_{n}B_{n}$ and proceeding in a recurrent way, we have 
\begin{align}\label{explicit-Bn-firstcase}
B_n =\frac{(1-r)(1-rq)B_0q^{n/2} +K_b(1-q^{n/2})(1-rq^{(n+1)/2})}{(1-rq^n)(1-rq^{n+1})}q^{n/2},
\end{align}
where $K_b=\big((1-r)B_0-(1-rq^2)q^{-1}B_1\big)/(1-q^{-1/2})$. Using \eqref{power-pi1}, we then deduce
\begin{align}\label{explicit-c_n-firstcase}
c_n= \frac{(1-q^{n-1/2})(1-q^{n/2})\big(B_0(1-rq)(1+q^{n/2})+K_b(q^{1/2}-q^{n/2}) \big)q^{-n/2}}{2(1+q^{1/2})(1-rq^n)}\;.
\end{align}
Also since $c_n=t_nC_n$, we obtain
\begin{align}\label{explicit-C_n-firstcase}
C_n= \frac{(1-q^{n-1/2})(1-q^{n/2})\big(B_0(1-rq)(1+q^{n/2})+K_b(q^{1/2}-q^{n/2}) \big)}{2k_2(1+q^{1/2})(1-rq^n)^2}\;.
\end{align}
Since $0<q<1$ and $t_n=k_2(1-rq^n)q^{-n/2}$, we obtain
\begin{align*}
\lim_{n\rightarrow \infty} B_n=0\;, \quad \lim_{n\rightarrow \infty} C_n=\frac{B_0(1-rq)+K_bq^{1/2}}{2k_2(1+q^{1/2})}\;.
\end{align*}
On the other hand, after rewriting \eqref{eq4S} as
\begin{align*}
(q^{-1}&+q^{-1/2})(1-rq^{n+3/2})(C_{n+1}-1/4)-2(1+\alpha)(1-rq^n)(C_n-1/4)\\
&+(q+q^{1/2})(1-rq^{n-3/2})(C_{n-1}-1/4)=(1-rq^n)(B_n^2 -2\alpha B_nB_{n-1}+B_{n-1} ^2)\;,
\end{align*}
we take the limit and obtain $$\lim_{n\rightarrow \infty}C_n=1/4\;.$$
The equality between the two limits obtained on $C_n$ requires 
$$k_2=2\frac{B_0(1-rq)+K_bq^{1/2}}{1+q^{1/2}}\;,$$ and consequently \eqref{explicit-C_n-firstcase} becomes
\begin{align*}
C_n=\frac{(1-q^{n-1/2})(1-q^{n/2})(1+\mathfrak{a}q^{n/2})}{4(1-rq^n)^2}\;,\quad \mathfrak{a}=\frac{B_0(1-rq)-K_b}{B_0(1-rq)+K_bq^{1/2}}\;.
\end{align*}
Further we obtain
$$\lim_{n\rightarrow \infty}q^{-n/2}(C_n-1/4)=\frac{\mathfrak{a}-1}{4}\;,\quad \mathfrak{b}=\lim_{n\rightarrow \infty}c_nB_n=\frac{K_b\big(B_0(1-rq)+K_bq^{1/2}\big)}{2(1+q^{1/2})} \;.$$
We now rewrite \eqref{eq5S} as
\begin{align*}
2(1-\alpha)\alpha_nB_n^2 &+c_{n+1}B_{n+1}+c_nB_{n-1} +(1-2\alpha)(c_n+c_{n+1})B_n\\ &+2(1-\alpha^2)\gamma_{n+1}(C_{n+1}-1/4) -2(1-\alpha^2)\gamma_{n-1}(C_n-1/4)=0\;. 
\end{align*}
Taking the limit to the above expression yields $\mathfrak{a}=1-8\mathfrak{b}q^{-1/2}$. This means
\begin{align}
K_b\Big(q^{1/2}(1+q^{1/2})^2-4\big(B_0(1-rq)+K_bq^{1/2} \big)^2  \Big)=0\;.
\end{align}
We distinguish two cases.
\begin{itemize}
\item[i-]If $K_b=0$, then $\mathfrak{a}=1$ from what is preceding, we obtain
\begin{align*}
&B_n=\frac{B_0(1-rq)(1-r)q^n}{(1-rq^n)(1-rq^{n+1})}\;,~C_n=\frac{(1-q^{n-1/2})(1-q^n)}{4(1-rq^n)^2}\;,\\
&c_n=\frac{B_0(1-rq)(1-q^{n-1/2})(1-q^n)q^{-n/2}}{2(1+q^{1/2})(1-rq^n)}\;,~k_2=\frac{B_0(1-rq)}{2(1+q^{1/2})}\;.
\end{align*}
This means that $B_0$ and $r$ are the only possible free parameters. Using the above equations, \eqref{initialcondition-C1-firstcase}--\eqref{initialcondit-B2-firstcase} become
\begin{align*}
2\Big(\frac{\alpha(1-rq)q^{1/2}}{(\alpha+1)(1-rq^2)}  -1\Big)B_0^2 =\frac{(q-1)(q^{1/2}-1)}{2(1-rq)^2}-1 \;,\quad \quad\quad \quad\quad\\
\frac{1-rq}{\alpha+1/2}q^{1/2}\Big(\frac{(1-r)q^{3/2}}{1-rq^3}  -\frac{\alpha(4\alpha^2+2\alpha-1)}{\alpha+1}\Big)B_0^2 =\frac{(1-q^{3/2})(1-q)^2}{2(1-rq)}-1+rq^2& \;,\\
\frac{2q(2\alpha+1)(1-rq)^2}{(\alpha+1)(1-rq^2)^2}\Big(\frac{(1-r)q^{3/2}}{1-rq^3}+\frac{2\alpha}{2\alpha+1}-2\alpha\frac{(1-r)q^{1/2}}{1-rq}  \Big)B_0^2 =\frac{(1-q^{3/2})(1-q^2)}{(1-rq^2)^2}&-1  \;.
\end{align*}
Since by assumption $r\neq 0$, then it is not hard to see that these equations are incompatible.
\item[ii-] If $(B_0(1-rq)+K_bq^{1/2})^2 =q^{1/2}(1+q^{1/2})^2/4$, then we proceed exactly as in the previous case to see that this is also impossible.
Thus the result follows.

\end{itemize}

\end{proof}

\section{Main results}
We are ready to state and prove our first result from which we will recover the counterexample to Conjecture \ref{IsmailConj2478} presented in \cite{KCDM2022}. 

\begin{theorem}\label{T1}
The Chebyshev polynomials of the first kind, the Al-Salam Chihara polynomials with nonzero parameters $c$ and $d$ such that $(c,d)=\pm (1,q^{1/2})$ and the continuous dual $q$-Hahn polynomials with $q$ replaced by $q^{1/2}$ and $ab=1$, $ac=q^{1/4}$, and $bc=-q^{1/4}$ are the only OPS  satisfying \eqref{case-pi-1}. 
\end{theorem}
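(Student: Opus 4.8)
The plan is to prove the two inclusions of the characterization separately, the key tools being the system \eqref{eq1S-eq2S}--\eqref{eq5S} of Lemma~\ref{system-in-genral-form-} (specialized by $a_n\equiv0$, $b_n=\alpha_n$, $c_n=(\alpha_n-\alpha_{n-1})\sum_{j=0}^{n-1}B_j$ and $r_n=t_n=k_1q^{n/2}+k_2q^{-n/2}$, as dictated by \eqref{power-pi1} and \eqref{eq1S-eq2S}) and the trichotomy $c_1=0$ or $k_1=0$ or $k_2=0$ furnished by Lemma~\ref{uniqueness-case-1}. For the \emph{sufficiency} direction one checks \eqref{case-pi-1} directly for each of the three families, most conveniently from their $q$-hypergeometric representations or from the recurrence coefficients $(B_n,C_n)$, by verifying that these together with the above $c_n$ satisfy \eqref{eq1S-eq2S}--\eqref{eq5S} (equivalently \eqref{seq3S}--\eqref{seq7S}); for the Chebyshev polynomials of the first kind this is immediate, since writing $x=\cos\theta$, $q^{s}=e^{i\theta}$ yields $\mathcal{S}_qT_n=\alpha_nT_n$, i.e.\ $c_n\equiv0$.

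For the \emph{necessity} direction, let $(P_n)_{n\ge 0}$ be a monic OPS satisfying \eqref{case-pi-1}. If $c_n\equiv0$ (equivalently $k_1=k_2=0$), then $\sum_{j<n}B_j=0$ for every $n$ because $\alpha_n\neq\alpha_{n-1}$, hence $B_n\equiv0$ and $\mathcal{S}_qP_n=\alpha_nP_n$; since a routine induction from \eqref{def-Sx-fg} gives $\mathcal{S}_qx^m=\alpha_mx^m+(\text{lower degree})$ and $\alpha_m\neq\alpha_n$ for $m\neq n$, the relation $\mathcal{S}_qP_n=\alpha_nP_n$ has a unique monic solution of degree $n$, which is the monic Chebyshev polynomial of the first kind. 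Assume now $c_n\not\equiv0$; by Lemma~\ref{uniqueness-case-1} one of $c_1=0$, $k_1=0$, $k_2=0$ holds. If $c_1=0$, i.e.\ $B_0=0$, then the $n=2$ instance of \eqref{case-pi-1} forces $C_1=1/2$ and the $n=3$ instance forces $B_1=0$, whence $c_2=(\alpha_2-\alpha_1)(B_0+B_1)=0$ and \eqref{tn-pi-1} gives $k_1=k_2=0$, contradicting $c_n\not\equiv0$. Hence $c_1\neq0$, so $k_1=0$ or $k_2=0$, with the other of the two nonzero (again because $c_n\not\equiv0$).

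Say $k_1=0$, $k_2\neq0$ (the case $k_2=0$ is analogous). Then $r_n=t_n=k_2q^{-n/2}$ never vanishes, so the standing hypothesis holds and \eqref{eq3S} collapses to the constant-coefficient recurrence $B_{n+2}-(q^{1/2}+q)B_{n+1}+q^{3/2}B_n=0$, with characteristic roots $q^{1/2}$ and $q$; thus $B_n=Aq^{n/2}+Bq^{n}$ for constants $A,B$, and then $C_n=c_n/t_n=(\alpha_n-\alpha_{n-1})\bigl(\sum_{j<n}B_j\bigr)/t_n$ is an explicit function of $A$, $B$ and $k_2$. Substituting these closed forms into \eqref{eq4S} and \eqref{eq5S}, and imposing the initial data given by the $n=2,3$ instances of \eqref{case-pi-1} and by \eqref{tn-pi-1}, reduces the problem to a finite system of polynomial equations in $(A,B,k_2)$; solving it, discarding those solutions for which some $C_{n+1}$ vanishes, and recognizing the remaining recurrence coefficients identifies the Al-Salam--Chihara polynomials with $(c,d)=\pm(1,q^{1/2})$ and the stated continuous dual $q$-Hahn family. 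In the case $k_2=0$ one has instead $q^{3/2}B_{n+2}-(q+q^{1/2})B_{n+1}+B_n=0$ with roots $q^{-1/2},q^{-1}$, so $B_n=Aq^{-n/2}+Bq^{-n}$, and the same elimination is carried out.

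The step I expect to be the main obstacle is this last elimination: even after \eqref{eq3S} has been linearized, \eqref{eq4S} and \eqref{eq5S} remain genuinely nonlinear once $C_n$ is expressed through $A$ and $B$, so isolating the finitely many admissible parameter values and then matching the resulting $(B_n,C_n)$ with the tabulated recurrence coefficients of the Al-Salam--Chihara and continuous dual $q$-Hahn families (the latter with $q\mapsto q^{1/2}$, $ab=1$, $ac=q^{1/4}$, $bc=-q^{1/4}$) is a long though entirely mechanical computation. The only delicate side point is whether the preliminary hypothesis $r_n\neq0$ may be assumed: its failure would force $t_{n_0}=c_{n_0}/C_{n_0}=0$ for some $n_0$, and a short case analysis (on $n_0=1$ versus $n_0\ge2$, using low-order instances of \eqref{case-pi-1}) shows this leads back to the cases already treated, hence ultimately to the Chebyshev polynomials of the first kind.
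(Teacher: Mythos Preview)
Your plan is correct and follows essentially the same route as the paper: the trichotomy $k_1k_2B_0=0$ from Lemma~\ref{uniqueness-case-1}, the reduction of \eqref{eq3S} to a constant-coefficient recurrence with roots $q^{1/2},q$ (resp.\ $q^{-1/2},q^{-1}$), and then an elimination using \eqref{eq4S}--\eqref{eq5S} together with the low-order instances of \eqref{case-pi-1}. The paper carries out that elimination explicitly---combining the $n=2,3$ relations with the explicit form of $B_n$ one obtains the single factorization $\bigl(B_0^2-q^{1/2}\bigr)\bigl(4B_0^2-(1+q^{1/2})^2\bigr)=0$, whose two roots give precisely the Al-Salam--Chihara and continuous dual $q$-Hahn families you anticipate---so the ``mechanical'' step you flag is indeed the heart of the argument and is not bypassed.
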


\begin{proof}
Note that \eqref{uniqueness-cond-case-1} is equivalent to $k_1k_2B_0= 0$. We then have the following cases.\\
\textbf{I- }{\bf Case $B_0=0$.}\\
For this this case using \eqref{initialcondition-C1-firstcase}--\eqref{initialcondit-B2-firstcase}, we obtain $B_1=0$, $C_1=1/2$ and $C_2=1/4$. This implies that $c_1=0=c_2$ and $t_1=0=t_2$ using \eqref{power-pi1} and \eqref{eq1S-eq2S}. Hence $t_n=0=c_n$ for all $n=1,2,\ldots$. We deduce $B_n=0\;.$
With this \eqref{eq5S} reads as $ \gamma_{n+1}(C_{n+1}-1/4) -\gamma_{n-1}(C_n-1/4)=0$. Hence we obtain
$$B_{n-1}=0\;,\quad C_1=1/2\;,\quad C_{n+1}=1/4 \quad (n=1,2,\ldots)\;.$$ 
This is the Chebyshev polynomial of the first kind, $(T_n)_{n\geq 0}$, and we have 
\begin{align}
\mathcal{S}_qT_n(x)=\alpha_nT_n(x)\quad (n=0,1,2,\ldots)\;.\label{full-equation-chebyshev-case1}
\end{align}
\\
\textbf{II- }{\bf Case $k_1=0$.}\\
This means $c_2C_1-q^{-1/2}c_1C_2=0$. We assume here that $B_0\neq 0$ if not we obtain again the solution of the previous case. Then from \eqref{initialcondit-B2-C2-firstcase}, we see that $B_0+B_1\neq 0$. Using \eqref{power-pi1} and \eqref{initialcondition-C1-firstcase}, we then obtain, from $k_1=0$, the following equation
\begin{align}\label{B_0C_2-firstcase-k1=0}
B_0C_2= (2\alpha+1)q^{1/2}(B_0+B_1)\Big(\frac{1}{2}+\frac{B_1-(2\alpha+1)B_0}{2(\alpha+1)}B_0  \Big)\;.
\end{align}
Using \eqref{B_0C_2-firstcase-k1=0}, \eqref{initialcondit-B2-C2-firstcase} becomes
\begin{align}\label{B_0B_2-firstcase-k_1=0}
B_0B_2 =\frac{1}{2}(2\alpha+1)(1+q^{1/2})q^{1/2}+\frac{B_0}{2(\alpha+1)}&\Big[ \big(4\alpha^2+2\alpha-1+(2\alpha+1)^2q^{1/2} \big)B_1    \\
&+\big(4\alpha^2+2\alpha-1-(2\alpha+1)^3q^{1/2} \big)B_0  \Big]\;.\nonumber
\end{align}
In the meantime, we claim that 
\begin{align}\label{B_nfirstcase-k_1=0}
B_n=\frac{1}{q^{1/2}(q^{1/2}-1)}\left((B_1-q^{1/2}B_0)q^n +(qB_0-B_1)q^{n/2}  \right)\quad (n=0,1,2,\ldots)\;.
\end{align}
Indeed \eqref{eq3S} in this case reads as $q^{-1/2}B_{n+1}+(1+q^{1/2})B_{n+1}+qB_n=0$. Since $q$ and $q^{1/2}$ are the solutions for the associated characteristic equation, we obtain $B_n=vq^n +rq^{n/2}$, for some $v,r\in \mathbb{C}$, and therefore \eqref{B_nfirstcase-k_1=0} holds by writing $B_0$ and $B_1$ in term of $r$ and $v$ after taking $n=0$ and $n=1$ in the above solution.   

We also claim that the following equations hold.
\begin{align}
&1+\alpha +q^{-1}B_0B_1 -(1+2q^{-1/2})B_0^2=0\;,\label{equation1-firstcase-k1=0}\\
&B_0(B_0+B_1)\Big( (q^{3/2}+3q+4q^{1/2}+5+2q^{-1/2}+q^{-1})B_0- (q^{1/2}+1+2q^{-1/2}+q^{-1})B_1 \Big) \label{equation2-firstcase-k1=0} \\
&\quad \quad \quad \quad \quad \quad \quad \quad\quad \quad \quad \quad\quad =-\alpha(\alpha+1)B_0+2\alpha(2\alpha+1)(\alpha+1)q^{1/2}(B_0+B_1)\;.\nonumber
\end{align}

Indeed \eqref{equation1-firstcase-k1=0} is obtained using the expression of $B_2$ computed from \eqref{B_nfirstcase-k_1=0} and replacing it into \eqref{B_0B_2-firstcase-k_1=0}. \eqref{equation2-firstcase-k1=0} is also obtained by replacing in \eqref{initialcondit-B2-firstcase} the expression of $B_2$ and $C_2$ computed from \eqref{B_nfirstcase-k_1=0} and \eqref{B_0C_2-firstcase-k1=0}, respectively. 

Solving the system of equations \eqref{equation1-firstcase-k1=0}--\eqref{equation2-firstcase-k1=0}, we obtain the following equation
$$\big(B_0^2-q^{1/2}\big)\big(4B_0^2 -(1+q^{1/2})^2\big)=0\;,$$
and we then consider two cases.
\begin{itemize}
\item[i-] Suppose that $B_0^2=\mbox{$\frac{1}{4}$}(1+q^{1/2})^2$.\\
Then we obtain from \eqref{equation1-firstcase-k1=0}, $B_1=qB_0$ and consequently $B_n=B_0q^n$ from \eqref{B_nfirstcase-k_1=0}. Hereafter we denote $$s=\pm 1\;.$$
We also obtain from \eqref{initialcondition-C1-firstcase}, $C_1=(1-q)(1-q^{1/2})/4$. From \eqref{power-pi1} we obtain $c_1=(\alpha-1)B_0$ and since $k_1=0$, we have $k_2=q^{1/2}c_1/C_1= 4s$. Now from \eqref{power-pi1}, we compute $c_n$ and from $t_n=k_2q^{-n/2}=c_n/C_n$, we deduce $C_n$.
This gives
\begin{align}\label{Bn-Cn-solution-Al-Salam-Chihara-firstcase-k1=0}
B_n=\frac{s}{2}(1+q^{1/2})q^n\;, ~ C_n=\frac{1}{4}(1-q^n)(1-q^{n-1/2})\;,~
c_n=\frac{s(1-q^n)(1-q^{n-1/2})}{4q^{n/2}}\;.
\end{align}
In addition equations \eqref{eq4S}--\eqref{eq5S} become
\begin{align}
(q^{-1}+q^{-1/2})&(C_{n+1}-1/4)-2(1+\alpha)(C_n-1/4)+(q+q^{1/2})(C_{n-1}-1/4)\label{Cn-equation-when-k1=0}\\
&=B_n^2-2\alpha B_nB_{n-1}+B_{n-1}^2\;,\nonumber\\
2(1-\alpha)&\alpha_nB_n^2 +c_{n+1}B_{n+1} +c_nB_{n-1}+(1-2\alpha)(c_n+c_{n+1})B_n \label{last-equation-firstcase-when-k1=0}\\ &+2(1-\alpha^2)\gamma_{n+1}(C_{n+1}-1/4)-2(1-\alpha^2)\gamma_{n-1}(C_n-1/4)=0\;.\nonumber
\end{align}
One may easily check that the above solution given in \eqref{Bn-Cn-solution-Al-Salam-Chihara-firstcase-k1=0} satisfy \eqref{Cn-equation-when-k1=0}--\eqref{last-equation-firstcase-when-k1=0}. This means that
\begin{align}\label{full-solution-Al-salam-Chihara}
\mathcal{S}_qQ_n(x;s,sq^{1/2}|q)=\alpha_nQ_n(x;s,sq^{1/2}|q)
+\frac{s(1-q^n)(1-q^{n-1/2})}{4q^{n/2}}Q_{n-1}(x;s,sq^{1/2}|q) \;,
\end{align}
where $(Q_n(.;c,d|q))_{n\geq 0}$ is the monic Al-Salam Chihara polynomial.
 
\item[ii-] Suppose that $B_0^2=q^{1/2}$.\\
Proceeding exactly as in (i), we obtain 
\begin{align}
B_n&=\frac{s}{2}\Big((1+q^{-1/2})q^{n/2} +1-q^{-1/2}\Big)q^{(2n+1)/4}\;,\label{Bn-solution-continuous-dual-q-hahn}\\
C_n&=\frac{1}{4}(1+q^{(n-1)/2})(1-q^{n/2})(1-q^{n-1/2})\;,\label{Cn-solution-continuous-dual-qhahn}\\
c_n&=\frac{s}{4}(1-q^{n/2})(1-q^{n-1/2})(1+q^{(n-1)/2})q^{-(2n-1)/4}\;.\label{cn-solution-continuous-dual-qhahn}
\end{align}
These coefficients also satisfy \eqref{Cn-equation-when-k1=0}--\eqref{last-equation-firstcase-when-k1=0}. Thus 
\begin{align}
\mathcal{S}_qH_n(&x;s,-s,sq^{1/4}|q^{1/2})=\alpha_nH_n(x;s,-s,sq^{1/4}|q^{1/2})\label{full-equation-for-continuousdual-qhahn-firstcase}\\
&+\frac{s(1-q^{n/2})(1-q^{n-1/2})(1+q^{(n-1)/2})}{4q^{(2n-1)/4}}H_{n-1}(x;s,-s,sq^{1/4}|q^{1/2}) \;,\nonumber
\end{align}
where $(H_n(.;a,b,c|q))_{n\geq 0}$ is the monic continuous dual $q$-Hahn polynomial.
\end{itemize}

Similarly, for the case $k_2=0$, we obtain the same solutions with $q$ replaced by $q^{-1}$. Theorem \ref{T1} follows from this and Lemma \ref{uniqueness-case-1}.  
\end{proof}
\begin{remark}\label{remark-intro}
In \cite[p.301]{IsmailBook2005}, it is proved that 
$$\mathcal{D}_qT_n(x)=\gamma_n U_{n-1}(x) \quad (n=0,1,\ldots)\;.$$
This relation between the Chebyshev polynomials of first and second kind is useful when solving problems related with the Askey-Wilson operator. Equation \eqref{full-equation-chebyshev-case1} derived here also shows that the Chebyshev polynomials of the first kind constitute an appropriate basis for the averaging operator $\mathcal{S}_q$. This can be used to solve problems related with the mentioned operators as well as some connection formulae and linearisation problems. For simplicity, in the next case we will make some assumptions on the coefficients of the TTRR \eqref{TTRR} satisfied by the OPS solutions of \eqref{equation-explicit-to solve-general}.
\end{remark}

\subsection*{Case $\deg \pi =1$}
In this case, \eqref{equation-explicit-to solve-general} becomes 
\begin{align}\label{lastcase-deg-pi-is-2}
(x-c)\mathcal{S}_qP_n(x)=(\alpha_nx+b_n)P_n(x)+c_nP_{n-1}(x)\;,~c\in \mathbb{C}\;.
\end{align}
The following result holds.

\begin{theorem}\label{T2}
Let $(P_n)_{n\geq 0}$ be a monic OPS satisfying \eqref{TTRR} and \eqref{lastcase-deg-pi-is-2}. Assume that 
\begin{align}
\lim_{n \rightarrow \infty} q^{-n/2}B_n=0\;,~\lim_{n \rightarrow \infty} q^{-n/2}(C_n-1/4)=0\;.\label{assumptions-second-case}
\end{align}
 Then $(P_n)_{n\geq 0}$ are the Chebyschev polynomials of the first kind. 
\end{theorem}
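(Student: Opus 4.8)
The plan is to mimic, in the case $\deg\pi=1$, the argument that worked for $\pi(x)=1$ in Lemma~\ref{uniqueness-case-1} and Theorem~\ref{T1}, but now exploiting the extra asymptotic assumptions \eqref{assumptions-second-case} to pin down the parameters directly instead of having to derive limits of $B_n$ and $C_n$ from scratch. First I would record the constraints coming from low-index instances of \eqref{lastcase-deg-pi-is-2}: by \eqref{power-pi2} we have $a_n=\alpha_n$ and $b_n=-\alpha_n c+(\alpha_n-\alpha_{n-1})\sum_{j=0}^{n-1}B_j$, so the "free data" is really just $c$, the $B_j$'s and the $C_j$'s, all tied together through \eqref{eq1S-eq2S}--\eqref{eq5S}. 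In particular $t_n=c_n/C_n=k_1q^{n/2}+k_2q^{-n/2}$, and $r_n=t_n+\alpha_n-\alpha_{n-1}$; I would compute $r_n$ explicitly, noting $\alpha_n-\alpha_{n-1}=\tfrac12(1-q^{-1/2})q^{n/2}+\tfrac12(1-q^{1/2})q^{-n/2}$, so $r_n$ again has the form $\widetilde k_1 q^{n/2}+\widetilde k_2 q^{-n/2}$ for new constants. This is the key structural simplification: \eqref{eq3S} becomes a constant-coefficient recursion once divided appropriately, exactly as in \eqref{explicit-Bn-firstcase}.

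Next I would solve \eqref{eq3S} to get a closed form for $B_n$ as a rational combination of $q^{n/2}$-powers (the characteristic roots being powers of $q^{1/2}$ coming from the two geometric pieces of $r_n$), then feed this into \eqref{eq5S} via $c_n=t_nC_n$ and $s_n=c_n+a_nC_n$ to get a closed form for $C_n$. At this point the two hypotheses in \eqref{assumptions-second-case} enter decisively: $\lim q^{-n/2}B_n=0$ kills the top-order term in the numerator/denominator balance of $B_n$, forcing a relation among the constants (roughly, that the $q^{n/2}$-coefficient vanishes, which should collapse $\widetilde k_1$ or $k_1$ to $0$ or force $B_0$-type quantities to be zero); and $\lim q^{-n/2}(C_n-1/4)=0$ does the same for $C_n$, forcing the asymptotic value of $C_n$ to be exactly $1/4$ and the next-order correction to vanish. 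Together these two should be strong enough to force $k_1=k_2=0$, hence $t_n\equiv 0$, hence $c_n\equiv 0$, hence (orthogonality forbidding $C_{n+1}=0$) we are in a degenerate branch where \eqref{eq3S} and \eqref{eq4S} force $B_n\equiv 0$ and $C_{n+1}\equiv 1/4$ with the single exceptional value $C_1$ determined by an initial condition.

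Then I would close the argument exactly as in Case~I of Theorem~\ref{T1}: with $B_n\equiv 0$, equation \eqref{eq5S} reduces to $\gamma_{n+1}(C_{n+1}-1/4)-\gamma_{n-1}(C_n-1/4)=0$, which together with the initial data gives $C_1=1/2$, $C_{n+1}=1/4$ for $n\ge 1$, i.e. $(P_n)_{n\ge0}=(T_n)_{n\ge0}$, the monic Chebyshev polynomials of the first kind; one should also check directly that $(T_n)$ does satisfy \eqref{lastcase-deg-pi-is-2} for the appropriate value of $c$ (this is where the label \eqref{why-chebyshev-first-kind} referenced in the Remark presumably lives), presumably $(x-c)\mathcal{S}_qT_n(x)=(\alpha_n x+b_n)T_n(x)$ with $b_n=-\alpha_n c$, using \eqref{full-equation-chebyshev-case1}. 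The main obstacle I anticipate is bookkeeping: the system \eqref{eq1S-eq2S}--\eqref{eq5S} is nonlinear and the presence of the parameter $c$ (absent in the $\pi\equiv1$ case) means \eqref{eq4S} and especially \eqref{eq5S} carry more terms, so extracting the clean "characteristic root" structure of $B_n$ and then taking the two limits without sign or index errors will be delicate; a secondary subtlety is justifying that the limits in \eqref{assumptions-second-case} may legitimately be passed through the nonlinear expressions (this is fine since everything is an explicit rational function of $q^{n/2}\to 0$, but it must be said). Establishing that $k_1=k_2=0$ is forced — rather than merely some nonzero balance that still admits Al-Salam--Chihara or continuous dual $q$-Hahn type solutions — is the crux, and is precisely what the growth conditions \eqref{assumptions-second-case} are designed to rule out, since those families have $C_n-1/4$ of exact order $q^{n/2}$.
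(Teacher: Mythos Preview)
Your overall architecture matches the paper's: compute $a_n=\alpha_n$ and $b_n$ from \eqref{power-pi2}, write $r_n=\widehat a\,q^{n/2}+\widehat b\,q^{-n/2}$, solve \eqref{eq3S} for $B_n$ in closed form, and use $\lim q^{-n/2}B_n=0$ to kill the slow term in that closed form. That part is fine and coincides with what the paper does (it yields $\widehat K_b=0$, hence $B_n=\dfrac{(1-r)(1-rq)B_0\,q^{n}}{(1-rq^{n})(1-rq^{n+1})}$ with $r=-\widehat a/\widehat b$).

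The genuine gap is your central claim that the two limits in \eqref{assumptions-second-case} are ``strong enough to force $k_1=k_2=0$, hence $t_n\equiv0$.'' They are not. After $\widehat K_b=0$, the paper still faces a trichotomy $B_0=0$, $r=0$, $r\neq0$, and in the last two branches one produces explicit candidate pairs $(B_n,C_n)$ (with $B_n\sim q^{n}$ and $C_n-\tfrac14\sim q^{n}$) that \emph{do} satisfy both asymptotic hypotheses while having $t_n\not\equiv0$; for instance when $r=0$ one has $\widehat a=0$, which forces $k_1=\tfrac12(q^{-1/2}-1)\neq0$, yet $q^{-n/2}B_n\to0$ and $q^{-n/2}(C_n-\tfrac14)\to0$ hold. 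These branches are eliminated not by asymptotics but by going back to the low-index constraints \eqref{b1-pi-2-case-2}--\eqref{b2-pi-2-case-2} and showing the resulting linear system in $\widehat b$ and $c$ is inconsistent (it collapses to $C_1C_2=0$ in the $r=0$ case, and similarly for $r\neq0$). Your outline records those low-index constraints but never uses them for this purpose; the plan as written would stall precisely here.

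A secondary issue: you propose to ``close the argument exactly as in Case~I of Theorem~\ref{T1}'' by reducing \eqref{eq5S} to $\gamma_{n+1}(C_{n+1}-1/4)-\gamma_{n-1}(C_n-1/4)=0$. That reduction is specific to $a_n\equiv0$ (the $\pi\equiv1$ case); here $a_n=\alpha_n$, so \eqref{eq5S} retains many extra terms even when $B_n\equiv0$. In the paper's $B_0=0$ branch the determination of $C_1=1/2$, $C_2=1/4$ comes instead from pushing the initial conditions to $n=3$ and obtaining \eqref{why-chebyshev-first-kind}, then feeding those values into the solved form of \eqref{eq4S}. So the endgame also needs to be rerouted through \eqref{eq4S} and the $n\le3$ instances of \eqref{lastcase-deg-pi-is-2}, not through the $\pi\equiv1$ version of \eqref{eq5S}.
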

\begin{proof}
Taking successively $n=1$ and $n=2$ in \eqref{lastcase-deg-pi-is-2} using \eqref{TTRR} and \eqref{Dx-xn} we obtain the following:
\begin{align}
&b_1=-\alpha c+(\alpha-1)B_0\;,\quad c_1=(\alpha-1)(B_0-c)B_0\;, \label{b1-pi-2-case-2}\\
&b_2=-(2\alpha^2-1)c+(\alpha -1)(2\alpha+1)(B_0+B_1)\;,\\ 
&c_2=2(\alpha^2-1)(C_1-B_0B_1-1/2)+(\alpha-1)(2\alpha+1)(B_0+B_1)(B_0+B_1-c)\;,\label{c2-lastcase}\\
&(b_2+c)(B_0B_1-C_1)+(1-\alpha^2)c=c_2B_0\;. \label{b2-pi-2-case-2}
\end{align}
Since $0\neq r_n=t_n+\alpha_n-\alpha_{n-1}=\widehat{a}q^{n/2}+\widehat{b}q^{-n/2}$, without lost of generality, let's assume that $\widehat{b}\neq 0$. We then obtain $$r_n=\widehat{b}q^{-n/2}(1-rq^n),\;~r=-\widehat{a}/\widehat{b}\;.$$ 
Solving \eqref{eq3S} we find
\begin{align*}
B_n =\frac{(1-r)(1-rq)B_0q^{n/2} +\widehat{K}_b(1-q^{n/2})(1-rq^{(n+1)/2})}{(1-r q^{n})(1-rq^{n+1})}q^{n/2}\; ,
\end{align*}
for $n=0,1,2,\ldots$, where $\widehat{K}_b=\big((1-r)B_0 -q^{-1}(1-rq^2)B_1\big)/(1-q^{-1/2})$. From \eqref{assumptions-second-case}, we obtain $\widehat{K}_b=\lim_{n\rightarrow \infty}q^{-n/2}B_n=0$. Hence
\begin{align}\label{first-Bn-pi-2-a}
B_n =\frac{(1-r)(1-rq)B_0q^{n}}{(1-r q^{n})(1-rq^{n+1})}\; ,
\end{align}
for each $n=0,1,\ldots$. We distinguish three cases.\\
\textbf{Case 1} Assume that $B_0=0$.\\
This implies that $B_n=0$ and solving \eqref{eq4S}, we obtain
\begin{align}\label{first-Cn-pi-2-b-homo}
C_{n+1} =\frac{1}{4}+\frac{(1-rq^{1/2})(1-rq^{3/2})(C_1-1/4)q^{n/2} +\widehat{K}_c(1-q^{n/2})(1-rq^{(n+2)/2})}{(1-r q^{n+1/2})(1-rq^{n+3/2})}q^{n/2}\; ,
\end{align}
where $\widehat{K}_c=\Big((1-rq^{1/2})(C_1-1/4) -q^{-1}(1-rq^{5/2})(C_2-1/4)\Big)/(1-q^{-1/2})$.\\
We claim that $$C_1=1/2\;,C_2=1/4\;.$$
Indeed, from \eqref{b1-pi-2-case-2}--\eqref{b2-pi-2-case-2}, we obtain $c_1=0$, $c_2=2(\alpha^2-1)(C_1-1/2)$, $(C_1-1/2)c=0$. For $n=3$ in \eqref{lastcase-deg-pi-is-2} taking into account that $B_n=0$, we also find $$b_3=(\alpha_3-\alpha_2)(B_0+B_1+B_2)-\alpha_3 c=-\alpha(4\alpha^2-3)c\;,$$
\begin{align*}
\quad\quad c_3&=- 4\alpha(\alpha^2-1)(B_0B_1+B_0B_2+B_1B_2-C_1-C_2+3/4)\\
&+(b_3+\alpha_2c)(B_0+B_1+B_2)\\
&=  4\alpha(\alpha^2-1)(C_1+C_2-3/4)\;,
\end{align*}
and
$$(C_1-B_0B_1)c_3+(1-\alpha^2)(B_0+B_1+B_2)c=(b_3+c)(B_0C_2+B_2C_1-B_0B_1B_2)\;,$$
\begin{align*}
(b_3+\alpha c)(B_0B_1+B_0B_2&+B_1B_2-C_1-C_2)+(\alpha_3-1)(B_0C_2+B_2C_1-B_0B_1B_2)\\
&=(\alpha^2-1)(B_0+B_1+B_2)+(B_0+B_1)c_3+3\alpha(\alpha^2-1)c\;.
\end{align*}
From these equations, we obtain
$$c_3=4\alpha(\alpha^2-1)(C_1+C_2-3/4)=0\;.$$ i.e.
\begin{align}\label{why-chebyshev-first-kind}
C_1+C_2-\frac{3}{4}=0\;.
\end{align}
Since $t_n=c_nC_n=2\alpha t_{n-1}-t_{n-2}$, from $c_1=c_3=0$, we deduce $c_n=0=t_n$ for each $n=1,2,\ldots$. Then $C_1=1/2$ and the claim holds by \eqref{why-chebyshev-first-kind}. This also implies that $$r_n=\alpha_n-\alpha_{n-1}=\frac{1}{2}(1-q^{1/2})(1-q^{n-1/2})\;.$$
That is $\widehat{b}=\frac{1}{2}(1-q^{1/2})$ and $r=q^{-1/2}$. 
Therefore we obtain with \eqref{first-Cn-pi-2-b-homo} that
$$B_{n}=0\;,~C_1=1/2\;,~C_{n+2}=1/4\;,~b_n=-\alpha_nc,\quad~c_n=0,~ n=0,1,\ldots\;.$$
are solutions of \eqref{lastcase-deg-pi-is-2} since \eqref{eq5S} is also satisfied. This is the monic Cheybyshev polynomial of the first kind and so
\begin{align}\label{relation-case2-chebyshev}
(x-c)\mathcal{S}_qT_n(x)=\alpha_n\big(T_{n+1}(x)-c T_n(x)+\mbox{$\frac{1}{4}$}T_{n-1}(x)  \big)~~\quad~ (n=0,1,\ldots)\;.
\end{align}
\textbf{Case 2} Assume now that $r=0$.\\
In this case $\widehat{a}=0$ and so \eqref{first-Bn-pi-2-a} becomes $B_n=B_0q^n$. In the meantime, \eqref{eq4S} reduces to
\begin{align*}
q^{-1}(1+q^{1/2})(C_{n+1}-1/4)-2(1+\alpha)(C_n-1/4)&+q(1+q^{-1/2})(C_{n-1}-1/4)\\
&=2(\alpha-1)(2\alpha+1)B_0 ^2q^{2n-1} \;.
\end{align*}
Solutions of this equation are given by the following expression $$C_{n+1}=\frac{B_0 ^2}{2(\alpha+1)}q^{2n+1}+vq^n+uq^{n/2}+\frac{1}{4},\;\quad u,v\in \mathbb{C}\;.$$ Taking into account our assumption \eqref{assumptions-second-case}, we find $u=0$. Let $a$ and $b$ be two complex numbers such that $a+b=2B_0$ and $ab=1+4C_1/(q-1)$. Then we find
$$B_n=\frac{1}{2}(a+b)q^n,~~C_{n+1}=\frac{1}{4}(1-abq^n)(1-q^{n+1}),~~~~\quad~ a^2+b^2=2\alpha ab\;.$$
We claim that this does not provide a solution to \eqref{lastcase-deg-pi-is-2}. Indeed, since we have $r_n=t_n+\alpha_n-\alpha_{n-1}=\widehat{b}q^{-n/2}$, initial conditions \eqref{b1-pi-2-case-2}--\eqref{b2-pi-2-case-2} become
\begin{align}
&\widehat{b}q^{-1/2}C_1 +(\alpha-1)B_0c=(\alpha-1)(B_0 ^2 +C_1),\label{solve-init1}\\
&(\alpha-1)^{-1}~\widehat{b}q^{-1}B_0C_2 -2(\alpha+1)(C_1-B_0B_1-1/2)c= T,\\
&(\alpha-1)^{-1}~\widehat{b}q^{-1}C_2 +(2\alpha+1)(B_0+B_1)c =U\;,\label{solve-init3}
\end{align}
with $U$ and $T$ given by $U=2(\alpha+1)(C_1-B_0B_1-1/2)+(2\alpha+1)(C_2+(B_0+B_1)^2)$ and $T=(2\alpha+1)\Big((B_0B_1-C_1)(B_0+B_1) +B_0C_2\Big)$. Taking into account that $a^2+b^2=2\alpha ab$ i.e. $b=aq^{\pm 1/2}$, we see that the above system \eqref{solve-init1}--\eqref{solve-init3} has solutions in $\widehat{b}$ and $c$ if and only if $a$ is a solution of the following equation $q^2a^4-q^{1/2}(1+q)a^2+1=0$. This means $a=\pm q^{-1/4}$ or $a=\pm q^{-3/4}$. But for any choice of $a$, we obtain $C_1C_2=0$ and this is impossible. Hence the above system has no solutions.

\textbf{Case 3} Assume that $r\neq 0$.\\
Define two complex numbers $a$ and $b$ such that 
$$ab=r \;, ~~\quad  a-b=\frac{2(1-rq)B_0}{q^{1/4}(1+q^{1/2})}\;.$$
Then \eqref{first-Bn-pi-2-a} becomes
\begin{align*}
B_n=q^{1/4}(1+q^{1/2})\frac{(1-ab)(a-b)q^n}{2(1-abq^n)(1-abq^{n+1})}\;.
\end{align*}
Using this in \eqref{eq4S}, we see that by taking successively $n=3,4,\ldots$, the common denominator of $C_{n+1}$ is $(1-abq^{n+1/2})(1-abq^{n+1})^2(1-abq^{n+3/2})$, and therefore his numerator should have a similar form in order to fulfill the assumption.
Hence we find general solution of \eqref{eq4S} in the form
$$C_{n+1}=\frac{(1-v_1q^{n+1})(1-v_2q^{n+1})(1-v_3q^{n+1})(1-v_4q^{n+1})}{4(1-abq^{n+1/2})(1-abq^{n+1})^2(1-abq^{n+3/2})}\;,$$
for some complex numbers $v_i$, $i=1,2,3,4$. Putting this in \eqref{eq4S}, we find $v_1=1$, $v_2=a^2b^2$, $v_3=a^2$ and $v_4=b^2$. Therefore we obtain
\begin{align*}
C_{n+1}=\frac{(1-q^{n+1})(1-a^2q^{n+1})(1-b^2q^{n+1})(1-a^2b^2q^{n+1})}{4(1-abq^{n+1/2})(1-abq^{n+1})^2(1-abq^{n+3/2})}\;.
\end{align*}
Again this does not provide a solution to \eqref{lastcase-deg-pi-is-2}. Indeed, since we have $r_n=t_n+\alpha_n-\alpha_{n-1}=\widehat{b}q^{-n/2}(1-abq^n)$, we use initial conditions \eqref{b1-pi-2-case-2}--\eqref{b2-pi-2-case-2} to obtain a system similar to \eqref{solve-init1}--\eqref{solve-init3} and then proceed exactly as in the previous case to deduce that the new obtained system has no solutions for $\widehat{b}$ and $c$. Hence the result follows.
\end{proof}

\begin{remark}
It is important to notice that from \eqref{full-equation-chebyshev-case1} we can use Lemma \ref{lemma-for-alternative-relation} to obtain
\begin{align*}
(x^2-1)\mathcal{D}_qT_n(x)=\gamma_n\Big(T_{n+1}(x)-\frac{1}{4}T_{n-1}(x)\Big)\;. 
\end{align*}
This relation was also obtained in \cite{KCDMJP2021c}.
Similarly from \eqref{full-solution-Al-salam-Chihara}, we obtain
\begin{align*}
(\alpha^2-1)(x^2-1)\mathcal{D}_qQ_{n}&(x;s,sq^{1/2}|q)=(\alpha^2-1)\gamma_nQ_{n+1}(x;s,sq^{1/2}|q)\\
&+\big(c_{n+1}-\alpha c_n +(1-\alpha)\alpha_n B_n\big)Q_n(x;s,sq^{1/2}|q)\\
&+\big((B_n-\alpha B_{n-1})c_n +(1-\alpha^2)\gamma_n C_n \big)Q_{n-1}(x;s,sq^{1/2}|q)\\
&+(c_{n-1}C_n-\alpha c_nC_{n-1})Q_{n-2}(x;s,sq^{1/2}|q) \;,
\end{align*}
where $B_n$, $c_n$ and $C_n$ are given by \eqref{Bn-Cn-solution-Al-Salam-Chihara-firstcase-k1=0}. We also obtain from \eqref{full-equation-for-continuousdual-qhahn-firstcase}
\begin{align*}
(\alpha^2-1)(x^2-1)&\mathcal{D}_qH_{n}(x;s,-s,sq^{1/4}|q^{1/2})=(\alpha^2-1)\gamma_nH_{n+1}(x;s,-s,sq^{1/4}|q^{1/2})\\
&+\big(c_{n+1}-\alpha c_n +(1-\alpha)\alpha_n B_n\big)H_n(x;s,-s,sq^{1/4}|q^{1/2})\\
&+\big((B_n-\alpha B_{n-1})c_n +(1-\alpha^2)\gamma_n C_n \big)H_{n-1}(x;s,-s,sq^{1/4}|q^{1/2})\\
&+(c_{n-1}C_n-\alpha c_nC_{n-1})H_{n-2}(x;s,-s,sq^{1/4}|q^{1/2}) \;,
\end{align*}
where where $B_n$, $c_n$ and $C_n$ are given by \eqref{Bn-solution-continuous-dual-q-hahn}--\eqref{cn-solution-continuous-dual-qhahn}. This equation for the case $s=1$ therein is of type \eqref{0.2Dq-general} with $\deg \pi =2$, $r=1$ and $s=2$. We then recover the counterexample to Conjecture \ref{IsmailConj2478} presented in \cite{KCDM2022}. 
\end{remark}

\section*{Acknowledgments}
This work is partially supported by ERDF and Consejer\'ia de Econom\'ia, Conocimiento, Empresas y Universidad de la Junta de Andaluc\'ia (grant UAL18-FQM-B025-A) and by the Research Group FQM-0229 (belonging to Campus of International Excellence CEIMAR).

\end{document}